\DeclarePairedDelimiter\fl{\lfloor}{\rfloor} 
\DeclarePairedDelimiter\cb{\{}{\}} 
\title{Patterns of primes and composites on divisibility graph}
\author[1]{R. Abiya}
\author[1]{G. Ambika}
\affil[1]{Department of Physics, Indian Institute of Science Education and Research Tirupati, Andhra Pradesh, 517 507, India}
\runningauthor{Abiya and Ambika}
\begin{document}

\maketitle

\begin{abstract}
We study the undirected divisibility graph in which the vertex set is a finite subset of consecutive natural numbers up to N. We derive analytical expressions for measures of the graph like degree, clustering, 
geodesic distance and centrality in terms of the floor functions and the divisor functions.  We discuss how these measures depend on the vertex labels and the size of graph N. We also present the specific case of prime vertices separately as corollaries. We could explain the patterns in the local measures for a finite size graph as well as the trends in global measures as the size of the graph increases. 

\keywords{divisibility graph, prime vertices, degree, connectance}
\end{abstract}

\section{INTRODUCTION AND RESULTS}
In the world of numbers, primes and composites form two non-overlapping infinite sets. Out of these, the prime numbers, which form the building blocks of natural numbers, occur scattered in a non-homogeneous fashion. The famous Prime number theorem which states that the number of primes up to N, approaches $\frac{N}{logN}$, for large N is one of the instances where a pattern in prime numbers is found \cite{Hardy75}. In this study, we take a graph theoretic approach using the framework of a divisibility graph to understand the trends in primes and composites through some of the graph properties. As divisibility pattern among numbers can
distinguish the primes from other numbers in a basic way, the properties of a divisibility graph can probe into the intricacies in the architecture of primes and composites in a natural way. In this work, we present how the properties of the divisibility graph can be related to the properties of natural numbers, especially prime numbers.

We derive expressions for some of the measures like the degree, clustering and connectance or link density of a given vertex, average shortest distance between the vertices and some of the centrality properties of this graph. We find these properties depend mainly on two functions: the divisor function $s(n)$ and the floor function $\fl*{\frac{N}{n}}$.  Hence the expressions derived, specifically for the divisibility graph, help to understand the trends observed in the measures of the graph as its size N increases. We also explain the inherent patterns in degrees and clustering coefficients as arising due to the trends in the divisor functions and floor functions of natural numbers. We could also bring out the specific trends shown by the measures corresponding to prime numbers.

\subsection{Background and notation}
There are many graphs associated with the set of natural numbers. Lewis introduced the prime vertex graph and the common divisor graph \cite{Lewis08}. Praeger and Iranmanesh defined the bipartite divisor graph whose vertex set is a disjoint union of the vertex sets of the prime vertex graph and the common divisor graph \cite{Ir10}. In \cite{Ir16}, the authors study a undirected version of the divisibility graph $\mathscr{D}$(X), as a simple graph with vertex set X* ($=X-\{1\}$) and two elements of X* are adjacent if one of them divides the other. D(X) and $\mathscr{D}$(X) contain multiple components. Also reported is the construction of a weighted bipartite of composite and prime numbers, with connections between them decided by the prime factorisation \cite{Garcia14}. The congruence
relations among numbers are explored using a multiplex graph in reference \cite{XiaoYong16}. The patterns and symmetry present in different graph properties of the divisibility graph for large N, are studied using computational methods \cite{Snehal15}. Following this, a recent work has
applied this to the specific case of divisibility pattern within the elements of Pascal triangles \cite{Solares20}.  

In this work, we study the undirected divisibility graph $G_N$, in which the vertex set is $X_N$ = $\cb*{1,\dots,N}$, the finite subset of consecutive natural numbers up to any chosen number N. Two elements of $X_N$ are adjacent if one of the two divides the other. The Adjacency matrix is $A = [a_{ij}]$ where

$a_{ij}$ = 
$\begin{cases}
1, & \text{if $i \ne j$ and (either $i$ divides $j$ or $j$ divides $i$)} 
\\0, & \text{otherwise} 
\end{cases}$     

$G_N$ contains a single component because of the vertex $1$ which is adjacent to all other elements of $X_N$. For example, consider the graph $G_{20}$ given in figure \ref{fig:1}. In the graph, vertex $1$ is connected to all the other $N-1$ vertices, vertex $2$ is connected to vertex $1$ and all the even numbers, and so on. In all the graphs discussed in this context, the vertex labels are fixed, since the edges between vertices depend on the vertex labels. 

In graph analytics, degree or the number of neighbours, is an important concept in identifying significant vertices in the graph. It is used to measure the importance of a vertex or how central a vertex is in a graph. We discuss the degree ($k_n$) and some other properties like local clustering coefficient ($c_n$), mean geodesic distance ($l_n$) and betweenness centrality ($x_n$) of the vertices $n$ in $G_N$ and the connectance ($C$) of the graph.
\begin{figure}[hbtp]
\centering
\includegraphics[width=0.49\textwidth]{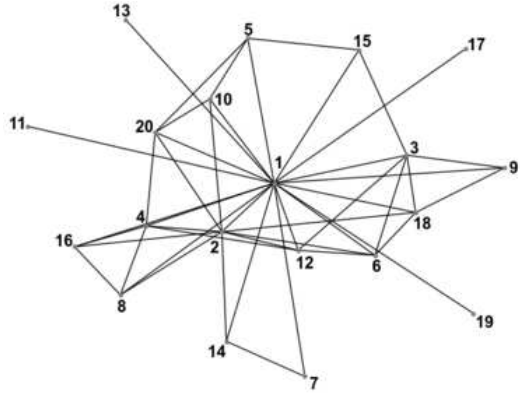}
\caption{ A divisibility graph of natural numbers $G_N$ is defined as a simple, finite graph with $N$ vertices, labelled $1,\dots,N$. There is an edge between two vertices labelled $a$ and $b$ if and only if one of the two divides the other. The above figure shows $G_{20}$. The vertex labels are fixed, since the connections between vertices depend on the labels.}\label{fig:1}
\end{figure}

\subsection{Statement of results}

We discuss the dependence of the properties of $G_N$ on the vertex labels and the number of vertices (N). In particular, the values of these properties when the vertex label is a prime number are discussed as corollaries. This approach also enables us to express the properties of a given divisibility graph without using the Adjacency matrix, but in terms of the divisor function and the number of multiples less than or equal to N.

\subsection{Properties of $G_N$}

\begin{definition}
The degree of a vertex $n$, is the number of edges connected to $n$. It is denoted by $k_n$. 
\end{definition}
\begin{theorem}
Let $n \in X_N$ and $n = \prod_{\alpha = 1}^k p^{j_\alpha}_\alpha$ be the prime factorisation of n. Let $s(n)$ be the number of divisors of $n$ including $n$ itself. $s(n) = \prod_{\alpha = 1}^k (j_\alpha + 1)$. Then,
\begin{equation}
    k_n = \fl*{\frac{N}{n}} + s(n) - 2 
\end{equation}
\end{theorem}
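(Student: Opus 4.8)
The plan is to count the neighbours of $n$ directly from the adjacency rule: a vertex $m \ne n$ is adjacent to $n$ precisely when $m \mid n$ or $n \mid m$. Since these two conditions (divides, is divided by) cannot hold simultaneously for $m \ne n$, the neighbour set splits as a disjoint union, and I would count the two pieces separately and add them.

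First I would count the multiples of $n$ inside $X_N$, i.e.\ the vertices $m$ with $n \mid m$. These are $n, 2n, 3n, \dots$ up to $N$, of which there are exactly $\fl*{\frac{N}{n}}$. Removing $n$ itself (which is not its own neighbour in a simple graph) leaves $\fl*{\frac{N}{n}} - 1$ proper multiples. Next I would count the divisors of $n$ inside $X_N$, i.e.\ the vertices $m$ with $m \mid n$. Every divisor of $n$ is at most $n \le N$, so all of them lie in $X_N$; by hypothesis there are $s(n)$ of them, and excluding $n$ itself leaves $s(n) - 1$ proper divisors. I would then note that the multiplicative structure $s(n) = \prod_{\alpha=1}^k (j_\alpha + 1)$ is already supplied in the statement and is used only to make $s(n)$ explicit in terms of the factorisation; the counting argument itself needs only that $s(n)$ is the number of divisors.

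Adding the two disjoint counts gives
\begin{equation}
    k_n = \left(\fl*{\frac{N}{n}} - 1\right) + \bigl(s(n) - 1\bigr) = \fl*{\frac{N}{n}} + s(n) - 2,
\end{equation}
which is the claimed formula.

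The only subtlety to check carefully — and the step I expect to be the main obstacle — is the disjointness and exhaustiveness of the split, together with the double-counting bookkeeping. I would verify that no vertex is counted in both sets (impossible for $m \ne n$, since $m \mid n$ and $n \mid m$ force $m = n$) and that $n$ itself, which appears in both the multiple list and the divisor list, is correctly excluded from each, accounting for the two units subtracted. A useful sanity check is the case $n = 1$: then $\fl*{\frac{N}{1}} = N$, $s(1) = 1$, giving $k_1 = N - 1$, consistent with vertex $1$ being adjacent to all other vertices.
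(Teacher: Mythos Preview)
Your proof is correct and follows essentially the same approach as the paper: split the neighbours of $n$ into proper multiples (of which there are $\fl*{\frac{N}{n}}-1$) and proper divisors (of which there are $s(n)-1$), and add. Your write-up is in fact more careful than the paper's, explicitly verifying disjointness, noting that every divisor of $n$ lies in $X_N$ since it is at most $n\le N$, and sanity-checking the case $n=1$.
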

\begin{proof}

$\displaystyle{k_n = \sum_{m=1}^N a_{nm}}$. By definition. $a_{nm}$ \  is non-zero and one whenever m is either a multiple or a divisor of n. Therefore $k_n$ = (No. of multiples of n less than N) + (No. of divisors of n excluding itself). The multiples of n less than N are ($2.n$), ($3.n$), $\dots$, ($\fl*{\frac{N}{n}}.n$) . So, the number of multiples of n less than N is ($ \fl*{\frac{N}{n}} $ - 1). The number of divisors of n excluding itself is $s(n)$ - 1. Hence the proof.
\end{proof}

\begin{corollary}
$k_p = \fl*{\frac{N}{p}}$ if and only if p is a prime. This is due to the fact that s(p) = 2 if and only if p is a prime.
\end{corollary}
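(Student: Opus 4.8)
The plan is to derive the corollary immediately from the degree formula of the preceding theorem, so that the only real work is an arithmetic characterisation of primes via the divisor count $s$. Specialising $k_n = \fl*{\frac{N}{n}} + s(n) - 2$ to the case $n = p$ yields $k_p = \fl*{\frac{N}{p}} + s(p) - 2$. The first step is to note that, since the floor term $\fl*{\frac{N}{p}}$ appears on both sides, the identity $k_p = \fl*{\frac{N}{p}}$ is equivalent to $s(p) - 2 = 0$, that is, to $s(p) = 2$. This reduces the claimed equivalence to the purely number-theoretic statement that $s(p) = 2$ if and only if $p$ is prime.

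For that statement I would argue both directions using the product formula $s(n) = \prod_{\alpha=1}^{k}(j_\alpha + 1)$ supplied in the theorem. If $p$ is prime, then its factorisation consists of the single prime power $p^1$, so the product has one factor $j_1 + 1 = 2$, giving $s(p) = 2$. Conversely, suppose $s(n) = 2$. In the factorisation each exponent satisfies $j_\alpha \ge 1$, so every factor $j_\alpha + 1$ is an integer at least $2$; a product of such factors can equal the prime $2$ only when there is exactly one factor and that factor equals $2$. Hence $k = 1$ and $j_1 = 1$, so $n = p_1$ is prime.

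I expect no serious obstacle here, as the result is a direct corollary; the only point requiring care is the converse direction, where one must rule out spurious possibilities and, in particular, confirm that $n = 1$ is correctly excluded. The empty product gives $s(1) = 1 \ne 2$, so the vertex $1$ does not satisfy the hypothesis, and the equivalence holds exactly on the primes. Combining this characterisation with the reduction of the first paragraph completes the proof.
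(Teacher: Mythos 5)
Your proposal is correct and follows exactly the paper's route: the paper justifies the corollary in one line by the fact that $s(p) = 2$ if and only if $p$ is prime, and you simply fill in the straightforward details of that equivalence (the substitution into $k_n = \fl*{\frac{N}{n}} + s(n) - 2$, both directions via the product formula, and the exclusion of $n=1$), all of which are sound.
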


We relate the patterns in the degrees $k_n$ of $n$, for different $n$ to the combined patterns or trends in divisor function, $s(n)$ and $\fl*{\frac{N}{n}}$ for different $n$, in figure \ref{fig:2}
   
\begin{figure}[htbp]
\centering
\begin{subfigure}{0.49\textwidth}
\includegraphics[width=1\textwidth]{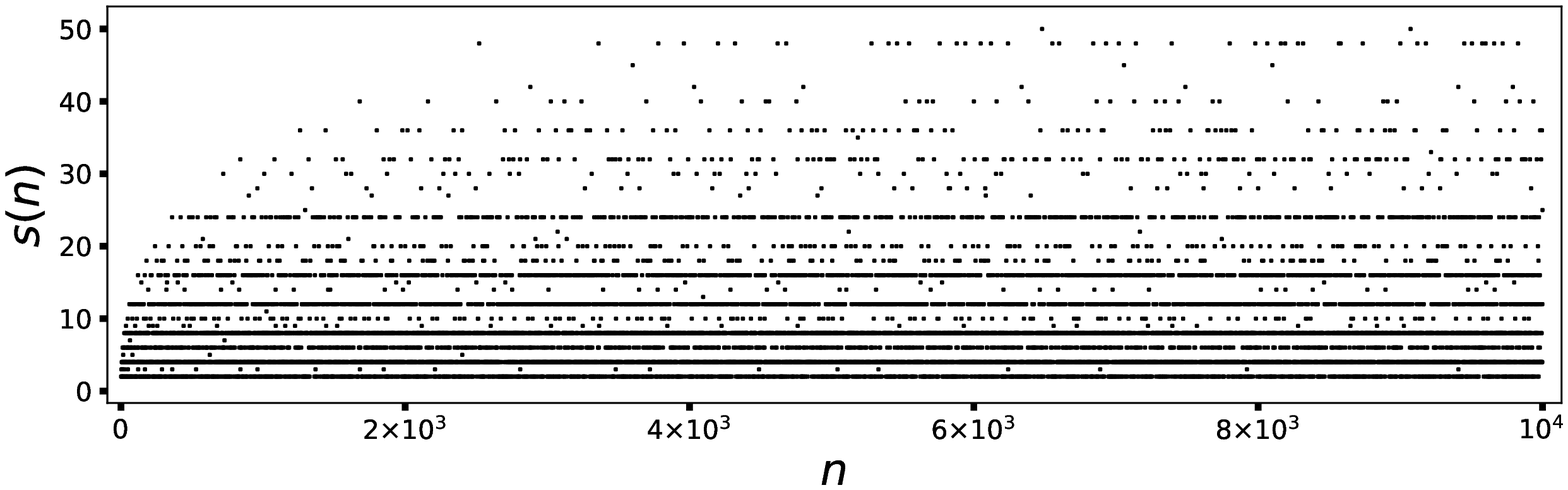} 
\caption{  }\label{fig:2a}
\end{subfigure}
\begin{subfigure}{0.49\textwidth}
\includegraphics[width=1\textwidth]{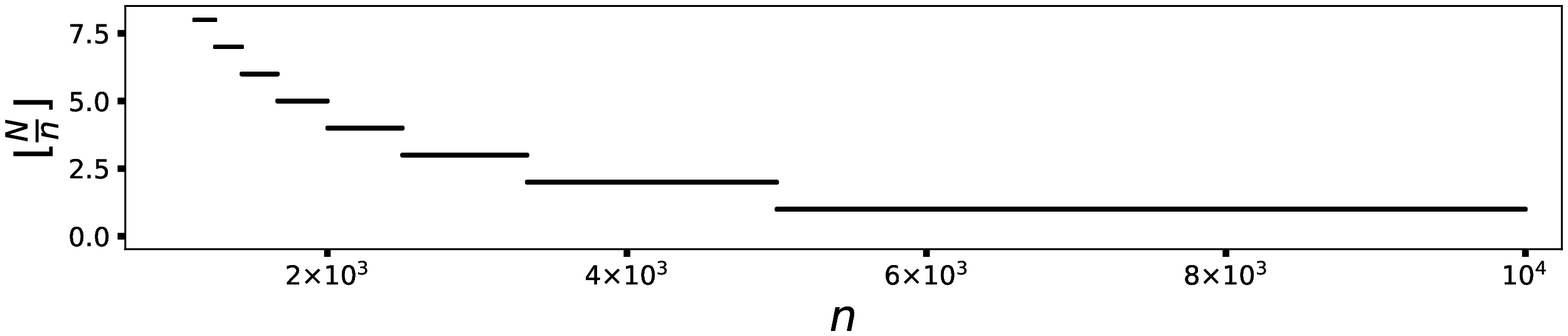}
\caption{  }\label{fig:2b}
\end{subfigure}
\vspace*{\fill} 
\begin{subfigure}{0.5\textwidth}
\includegraphics[width=1\textwidth]{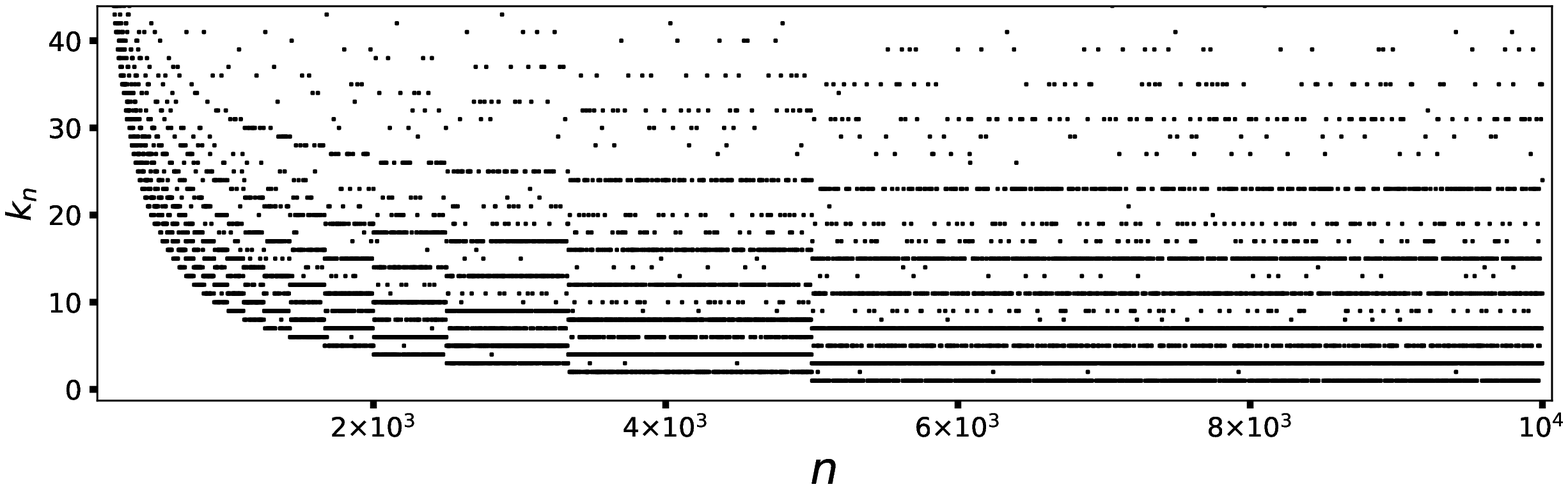}
\caption{  }\label{fig:2c}
\end{subfigure}
\caption{Plots of ($a$) divisor function, $s(n)$ vs $n$, ($b$) $\fl*{\frac{N}{n}}$ vs $n$, ($c$) degree, $k_n = s(n) + \fl*{\frac{N}{n}} - 2$ vs $n$ for $N=10^4$.
The patterns seen in the degrees $k_n$ of n 
can be related to the patterns in divisor function and the floor function.}\label{fig:2}
\end{figure}

\begin{definition}
Two vertices are said to be neighbors if there is a edge connecting them. For example, in $G_N$ the vertex $1$ is always a neighbor to any other vertex. The number of neighbors of $n$ is given by its degree $k_n$. Let $e_n$ be the number of pairs $(i,j)$ of neighbors of vertex $n$ such that $i$ and $j$ are connected. For example, consider the vertex $n=6$ in the graph $G_{10}$. The neighbors of $6$ are $1$, $2$, $3$ out of which the pairs ($1$, $2$), ($1$, $3$) are such pairs while the pair ($2$, $3$) is not. So, $e_6$ = $2$ in this graph. In terms of the Adjacency matrix

\begin{equation*}
    e_n = \sum_{s,t \in X_N} a_{ns} a_{st} a_{tn}.
\end{equation*}

That is $e_n$ is the number of neighbors $s,t$ of $n$ such that $s,t$ are connected. The local clustering coefficient $c_n$ of a vertex $n$, is defined as the ratio of $e_n$ to the number of pairs of neighbors of $n$.   
\begin{equation*}
    c_n = \frac{e_n}{^{k_n}C_2}
\end{equation*}
In the previous example, the local clustering coefficient of $6$ is $c_6$ = $\frac{2}{3}$. $c_n$ gives the fraction of the pairs of neighbors of $n$ that have an edge between them.
\end{definition}
\begin{theorem}
Let $n \in X_N$. If $n = \prod_{\alpha = 1}^k p^{j_\alpha}_\alpha$ is the prime factorisation of n and $s(n) = \prod_{\alpha = 1}^k (j_\alpha + 1)$ the number of divisors of $n$ including $n$ itself. Then 
\begin{equation}
    e_n = \sum_{m|n} s(m) -2s(n)+1 \ + \ \sum_{j=2}^{M(n)} \fl*{\frac{M(n)}{j}} + \ (M(n)-1) \ ( \ s(n)-2 \ ) 
\end{equation}
where $M(n)$ = $\fl*{\frac{N}{n}}$ .
\end{theorem}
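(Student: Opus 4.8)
The plan is to describe the neighbourhood of $n$ concretely and then sort the adjacent pairs of neighbours into three types. By the degree theorem, the neighbours of $n$ split into two disjoint families: the proper divisors $\{d : d\mid n,\ d\neq n\}$, of which there are $s(n)-1$, and the proper multiples $2n,3n,\dots,M(n)\,n$, of which there are $M(n)-1$, where $M(n)=\fl*{\frac{N}{n}}$. Since $e_n$ counts adjacent unordered pairs drawn from these neighbours, I would count separately the divisor--divisor pairs, the multiple--multiple pairs, and the mixed divisor--multiple pairs, and then add the three contributions.

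For the divisor--divisor pairs, the key identity is that the number of divisibility edges among \emph{all} divisors of $n$ (including $n$) equals $\sum_{m\mid n} s(m) - s(n)$: summing $s(m)$ over $m\mid n$ counts ordered pairs $(d,m)$ with $d\mid m\mid n$, and deleting the $s(n)$ diagonal terms $d=m$ leaves exactly the ordered (hence unordered) pairs with $d\mid m$, $d\neq m$. Restricting to proper divisors means removing the $s(n)-1$ edges incident to $n$ itself, which yields $\sum_{m\mid n} s(m) - 2s(n) + 1$; this is the first block of the claimed formula.

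Next, for the multiple--multiple pairs I would observe that $t_1 n$ and $t_2 n$ are adjacent exactly when $t_1\mid t_2$ (taking $t_1<t_2$), so I need the number of pairs $2\le t_1<t_2\le M(n)$ with $t_1\mid t_2$. Counting, for each $t_1$, its multiples lying in the range $(t_1,M(n)]$ gives $\sum_{t_1=2}^{M(n)}\bigl(\fl*{\frac{M(n)}{t_1}}-1\bigr)=\sum_{j=2}^{M(n)}\fl*{\frac{M(n)}{j}}-(M(n)-1)$. The mixed pairs are the easy case: every proper divisor $d$ satisfies $d\mid n\mid tn$, so $d$ is adjacent to every proper multiple $tn$, contributing $(s(n)-1)(M(n)-1)$ edges by transitivity of divisibility through $n$.

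Finally I would add the three counts. The subtle bookkeeping step, and essentially the only place an arithmetic slip is likely, is combining the leftover $-(M(n)-1)$ from the multiple--multiple count with the mixed term $(s(n)-1)(M(n)-1)$ to obtain $(M(n)-1)(s(n)-2)$, after which the total matches the stated expression exactly. The main conceptual obstacle is recognising that divisors and multiples require genuinely different combinatorial counts, a divisor-sum identity for one and a floor-sum for the other, whereas the mixed case collapses to a clean product; keeping the three ranges disjoint and not double-counting the vertex $1$ (a proper divisor adjacent to everything) is what makes the partition clean.
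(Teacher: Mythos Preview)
Your proof is correct and follows essentially the same route as the paper: the identical three-way split into divisor--divisor, multiple--multiple, and mixed pairs, with the same floor-sum count for multiples, the same product $(s(n)-1)(M(n)-1)$ for the mixed term, and the same recombination at the end. Your derivation of the divisor--divisor term via ordered pairs $(d,m)$ with $d\mid m\mid n$ is in fact cleaner than the paper's slightly muddled phrasing, but the underlying identity and the final expression $\sum_{m\mid n}s(m)-2s(n)+1$ are the same.
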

\begin{proof}
Let $N_{s(n)-1}$ denote the number of edges among the $s(n)-1$ divisors of $n$. If the only divisor of $n$ is $1$, then $N_{s(n)-1}=0$.  Let $N_{M(n)}$ denote the number of edges among the $M(n)-1$ multiples of $n$ and finally, $N_{int}$ denote the number of edges connecting a multiple of $n$ to a divisor of $n$. Clearly, $e_n$ =  $N_{s(n)-1}$ + $N_{M(n)}$ + $N_{int}$ . 

Let $m$, $m'$ be two divisors of $n$. Then, we can write $m = \prod_{\alpha = 1}^k p^{l_\alpha}_\alpha$ and $m' = \prod_{\alpha = 1}^k p^{l_\alpha'}_\alpha$ where $0 \le l_\alpha, l_\alpha' \le j_\alpha $. If $m'$ is a divisor of $m$ then $l_\alpha \le l_\alpha'$ for each $\alpha = 1, \dots, k$. For fixed $m$, the number of divisors of m is $s(m)-1$ = $\prod_{\alpha=1}^k (l_\alpha + 1) - 1$ . Summing over all the divisors m of n, we get $\displaystyle{\sum_{m|n} (s(m) - 1)}$. But this counts
 the case when $m=m'$ twice for each divisor $m$. So, subtracting the number of divisors from this gives
\begin{equation*}
N_{s(n)-1} = \sum_{m|n} (s(m) - 1) - (s(n)-1)
\end{equation*}
\begin{equation*}
\  = \left( \sum_{l_1 = 0}^{j_1}\sum_{l_2 = 0}^{j_2} \dots \sum_{l_k = 0}^{j_k} \left[\prod_{\alpha=1}^k (l_\alpha + 1) - 1\right] \right) - (s(n)-1).
\end{equation*}
\begin{equation*}
\ = \frac{\displaystyle{\prod_{i=1}^k} \  (j_i+1)(j_i+2)}{2^k} - 2s(n)+1
\end{equation*}

The multiples of $n$ are $a_2 =(2.n), \  a_3=(3.n), \  \dots, \  a_{M(n)} = (M(n).n)$. There is an edge between $a_\alpha$ and $a_\beta$ if either $\alpha|\beta$ or $\beta|\alpha$. To find the number of such edges (say $N_{M(n)}$ ),let us look at a divisibility graph $G_{M(n)}$ with vertices $V(G_{M(n)}) = \{1,\dots,M(n)\}$. Observe that the number of such edges $N_{M(n)}$ is the number of edges in the graph $G_{M(n)}$ after removing all the edges connected to $1$. 

Since each vertex $j \in V(G_{M(n)})$ is connected to $\fl*{\frac{M(n)}{j}}-1$ number of multiples, $\displaystyle{\sum_{j=1}^{M(n)} \left[\fl*{\frac{M(n)}{j}}-1\right]}$ gives the total number of edges in $G_{M(n)}$. Removing all the edges connected to the vertex $1$, 

\begin{equation*}
N_{M(n)} = \sum_{j=2}^{M(n)} \left[\fl*{\frac{M(n)}{j}}-1\right] = \sum_{j=2}^{M(n)} \fl*{\frac{M(n)}{j}} - (M(n) - 1).
\end{equation*}

Each divisor of $n$ is also a divisor of any multiple of $n$. So, there is an edge connecting each divisor of $n$ to each of its multiple. Hence, 
\begin{equation*}
N_{int} = (M(n)-1)(s(n)-1)
\end{equation*}

Therefore,
\begin{equation*}
e_n = \sum_{m|n} s(m) -2s(n)+1 + \sum_{j=2}^{M(n)} \fl*{\frac{M(n)}{j}} + (M(n)-1)(s(n)-2)
\end{equation*}
where 
\begin{equation*}
\sum_{m|n} \  s(m) = \frac{\displaystyle{\prod_{i=1}^k} \  (j_i+1)(j_i+2)}{2^k} 
\end{equation*}
\end{proof}{}

\begin{corollary}
For a prime number $p$, $e_p$ = $\displaystyle{\sum^{M(p)}_{j=2} \fl*{\frac{M(p)}{j}}}$.
\end{corollary}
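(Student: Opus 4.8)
The plan is to obtain the corollary as a direct specialization of the preceding theorem, substituting the defining property of a prime, namely $s(p) = 2$, into the general formula for $e_n$. First I would record the two facts about a prime $p$ that drive the simplification: its divisor count is $s(p) = 2$, and its only divisors are $1$ and $p$, so that $\sum_{m \mid p} s(m) = s(1) + s(p) = 1 + 2 = 3$.

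Next I would substitute these into the theorem's expression $e_n = \sum_{m \mid n} s(m) - 2s(n) + 1 + \sum_{j=2}^{M(n)} \fl*{\frac{M(n)}{j}} + (M(n)-1)(s(n)-2)$. The first group collapses: $\sum_{m \mid p} s(m) - 2s(p) + 1 = 3 - 4 + 1 = 0$. This reflects the fact that $N_{s(p)-1} = 0$, since $p$ has only the single proper divisor $1$ and there are no edges among a single vertex. The final term vanishes as well, because $s(p) - 2 = 0$ kills the factor $(M(p)-1)(s(p)-2)$. What survives is exactly the middle sum $\sum_{j=2}^{M(p)} \fl*{\frac{M(p)}{j}}$, which is the claimed identity.

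I would close by noting the conceptual reading of the result, which also serves as an independent check. The neighbors of $p$ are the vertex $1$ together with the $M(p)-1$ multiples of $p$ up to $N$. Vertex $1$ is joined to each of the other neighbors, contributing $M(p)-1$ connected pairs, while the connected pairs among the multiples number $N_{M(p)} = \sum_{j=2}^{M(p)} \fl*{\frac{M(p)}{j}} - (M(p)-1)$; adding these two contributions cancels the $M(p)-1$ terms and leaves $\sum_{j=2}^{M(p)} \fl*{\frac{M(p)}{j}}$. There is no genuine obstacle here: the only point requiring care is confirming that both the divisor-among-divisors term and the divisor-multiple interaction term vanish simultaneously, which is precisely what the prime condition $s(p) = 2$ guarantees.
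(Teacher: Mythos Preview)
Your proposal is correct and is exactly the intended route: the paper states the corollary without proof, as an immediate specialization of the preceding theorem, and your substitution of $s(p)=2$ and $\sum_{m\mid p}s(m)=3$ into the general formula carries this out cleanly. The additional conceptual check via the decomposition into the $M(p)-1$ edges from vertex~$1$ and the $N_{M(p)}$ edges among the multiples is a nice redundancy but not required.
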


Some interesting consequences of the above corollary are : For a given $N$, if p is a prime such that $\fl*{\frac{N}{2}} < p \le N$, then $c_p = 0$. This is because 1 is its only neighbor, making $e_p=0$. Similarly, if p is a prime such that $\fl*{\frac{N}{3}} < p \le \fl*{\frac{N}{2}}$, then $c_p = 1$. This is because such a prime's neighbors are 1 and 2p which are connected making $c_p=1$.

\begin{definition}
The length of the shortest path(s) between two vertices $n,m$ in a graph is defined as the geodesic distance or the shortest distance $d_{nm}$ between the two vertices. For example, $d_{1m}$ = $1$ for any vertex $m$ in the $G_N$. If there is no path between two vertices $n,m$ then $d_{nm}=0$.  The mean geodesic distance $l_n$ of a vertex $n$ is defined as the average of the shortest distance between $n$ and all vertices $m$ in the graph. 
\begin{equation*}
l_n = \sum_{m \in X_N} \frac{d_{nm}}{N}
\end{equation*}
\end{definition}

\begin{theorem}
Let $n \in X_N$ and $k_n$ be the degree of $n$. Then, the mean geodesic distance of n, is
\begin{equation}
    l_n = \frac{2N-k_n-2}{N}
\end{equation}
\end{theorem}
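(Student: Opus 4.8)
The plan is to exploit the single structural fact that makes this graph unusually shallow: vertex $1$ divides every other label, so it is adjacent to all of $X_N$. First I would record the consequence that the diameter of $G_N$ is at most $2$. Indeed, for any two distinct vertices $m,m'$, the walk $m \to 1 \to m'$ is a path of length $2$, so $d_{nm}\le 2$ for every $m\neq n$. This collapses the shortest-distance computation to a simple case analysis, since every distance $d_{nm}$ can only take the values $0$, $1$, or $2$.

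Next I would partition the vertex set $X_N$ according to the distance from $n$. There is exactly one vertex at distance $0$, namely $n$ itself. The vertices at distance $1$ are precisely the neighbors of $n$, and by definition there are $k_n$ of them. Every remaining vertex is distinct from $n$ and is not a neighbor of $n$, so it cannot be at distance $0$ or $1$; combined with the diameter bound above, each such vertex lies at distance exactly $2$. The count of these is $N-1-k_n$, obtained by removing $n$ and its $k_n$ neighbors from the total of $N$ vertices.

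With the partition in hand, the sum defining $l_n$ becomes a weighted count:
\begin{equation*}
\sum_{m\in X_N} d_{nm} = 0\cdot 1 + 1\cdot k_n + 2\cdot(N-1-k_n) = 2N-2-k_n.
\end{equation*}
Dividing by $N$ yields $l_n = \dfrac{2N-k_n-2}{N}$, as claimed.

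There is no serious obstacle here; the only point requiring care is the justification that every non-neighbor sits at distance \emph{exactly} $2$ rather than merely at most $2$, which is where the universal vertex $1$ does the essential work. I would also note the boundary consistency check: when $n=1$ we have $k_1=N-1$, the distance-$2$ class is empty, and the formula correctly returns $l_1=(N-1)/N$, matching the fact that vertex $1$ is at distance $1$ from all other vertices.
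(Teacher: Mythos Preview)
Your proof is correct and follows essentially the same approach as the paper: both use the universal vertex $1$ to conclude that every distance is $0$, $1$, or $2$, partition $X_N$ accordingly into $\{n\}$, the $k_n$ neighbors, and the $N-1-k_n$ remaining vertices, and sum. Your write-up is somewhat more careful in explicitly separating the upper bound $d_{nm}\le 2$ from the lower bound for non-neighbors, and the sanity check at $n=1$ is a nice touch, but there is no substantive difference.
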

\begin{proof}
Observe that 

$d_{nm}$ = 
$\begin{cases}
1, & \text{if $n \ne m$ and $n$ divides $m$ or $m$ divides $n$} 
\\0, & \text{if $n$=$m$} 
\\2, & \text{otherwise} 
\end{cases}$

This is because there is always a path of length two between any two vertices via $1$.
Therefore, $ l_n =\frac{k_n+2(N-k_n-1)}{N}= \frac{2N-k_n-2}{N}$ .
\end{proof}

\begin{corollary}
For a prime number p, 
\begin{equation}
l_p = \frac{ 2N - \fl*{\frac{N}{p}}-2 }{N} 
= 2 - \frac{2}{N} - \frac{1}{N}\fl*{\frac{N}{p}} 
\end{equation}
\end{corollary}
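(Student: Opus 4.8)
The plan is to derive this directly from the general mean-geodesic-distance formula $l_n = \frac{2N - k_n - 2}{N}$ established in the preceding theorem, combined with the degree corollary $k_p = \fl*{\frac{N}{p}}$, which holds precisely when $p$ is prime. Since the general formula already expresses $l_n$ entirely in terms of the degree $k_n$ and the fixed size $N$, the entire task reduces to substituting the prime-specific value of the degree.

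First I would recall that for a prime $p$ we have $s(p) = 2$, so the degree formula $k_n = \fl*{\frac{N}{n}} + s(n) - 2$ collapses to $k_p = \fl*{\frac{N}{p}}$; this is exactly the content of the earlier degree corollary, so no fresh argument is required. Next I would substitute $k_n = k_p = \fl*{\frac{N}{p}}$ into $l_n = \frac{2N - k_n - 2}{N}$, obtaining immediately
\[
l_p = \frac{2N - \fl*{\frac{N}{p}} - 2}{N}.
\]
Finally, I would split this single fraction term by term to rewrite it in the equivalent form $2 - \frac{2}{N} - \frac{1}{N}\fl*{\frac{N}{p}}$, which is a purely algebraic rearrangement matching the second displayed expression in the statement.

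Because every ingredient is already in hand — the closed form for $l_n$ and the prime specialization of the degree — there is essentially no obstacle to overcome; the corollary is a one-line substitution followed by trivial algebra. The only point worth a word of caution is that the identity $k_p = \fl*{\frac{N}{p}}$ \emph{characterizes} primes, holding if and only if $p$ is prime, so this formula for $l_p$ is genuinely special to prime vertices and would fail for a composite $n$ if one retained only the floor term.
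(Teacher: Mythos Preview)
Your proposal is correct and follows exactly the intended route: the paper states this corollary without a separate proof, as it is meant to follow immediately by substituting the prime-degree identity $k_p = \fl*{\frac{N}{p}}$ into the general formula $l_n = \frac{2N - k_n - 2}{N}$ from the preceding theorem. Your added remark that this expression characterises primes is a nice observation but not required for the corollary as stated.
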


\begin{definition}
Let $n,s,t \in V(G)$ where $G$ is any graph.
Define $g_{st}$ as the number of geodesic paths between the vertices $s$ and $t$. Let $n_{st}^n$ be the number of geodesic paths between the vertices $s$ and $t$ which pass through the vertex $n \ne s,t$. 
The betweenness centrality of a vertex $n$ is denoted by $x_n$. It is defined as 

\begin{equation*}
x_n' = \frac{1}{(N-1)(N-2)} \sum_{s,t \ne n \in V(G)} \frac{n_{st}^n}{g_{st}}
\end{equation*}

Let $x_n = (N-1)(N-2)x_n'$

In general, calculation of $x_n$ for a given graph is computationally done using different algorithms. However, in the case of the $G_N$, the values of $g_{st}$, $n_{st}^n$ can be calculated directly from the Adjacency matrix. 

Assume, $s,t,n \in X_N$. $g_{st}$ is the number of geodesic paths between $s$ and $t$ and $n_{st}^n$ is the number of geodesic paths between $s$ and $t$ which pass through $n \ne s,t$. If $s$ and $t$ are connected, then the geodesic path is of length one and it cannot pass through another $n$. So, $n_{st}^n$ = 0, if $s$ and $t$ are connected. If $s$ and $t$ are not connected then, geodesic path is of length 2, since there is a path between any two vertices in $G_N$ via the vertex $1$. The number of paths of length 2 between $s$ and $t$ is given by $[a^2]_{st} = \displaystyle{\sum_{\alpha=1}^N} a_{s\alpha}a_{\alpha t}$ . Hence, $g_{st}$ = $[a^2]_{st}$, if $s$ and $t$ are not connected. And $n_{st}^n$ can be written in terms of the adjacency matrix as $n_{st}^n$ = $a_{sn}a_{nt}$, which is one only if there is a path of length two via $n$ between $s$ and $t$. 

We can write $x_n$ as

\begin{equation}
    x_n = \sum_{s,t \ne n \in X_N} \frac{(1-a_{st})a_{sn}a_{nt}}{[a^2]_{st}}.
\end{equation}

The numerator is non-zero only if $a_{st}$ is zero. This gives a way to compute $x_n$ for $G_N$ directly from the Adjacency matrix.
\end{definition}

\begin{theorem}
Let $s,t,n \in X_N$ such that there is no edge between $s,t$. If $n = \prod_{\alpha = 1}^k p^{j_\alpha}_\alpha$ is the prime factorisation of n and $s(n) = \prod_{\alpha = 1}^k (j_\alpha + 1)$ the number of divisors of n including itself. Denote gcd of two numbers $s,t$ by d(s,t) and their lcm by l(s,t). Then,

\begin{equation}
   g_{st} =  s( d(s,t) ) + \fl*{ \frac{N} {st/d(s,t)} }
\end{equation}

\end{theorem}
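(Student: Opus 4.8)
The plan is to lean on the structure already established: since $s$ and $t$ share no edge, the geodesic distance between them is exactly $2$ (the path through vertex $1$ guarantees a connection, as noted just before the statement), so every geodesic path has the form $s$--$\alpha$--$t$ and $g_{st}$ equals the number of common neighbours $\alpha$ of $s$ and $t$; equivalently $g_{st} = [a^2]_{st}$. Because $a_{ss}=a_{tt}=0$, this count automatically excludes $\alpha=s$ and $\alpha=t$, so the whole problem reduces to counting vertices $\alpha \in X_N$ adjacent to both $s$ and $t$.

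First I would classify such an $\alpha$ by how it relates to $s$ and $t$ under divisibility. Adjacency to $s$ means $\alpha \mid s$ or $s \mid \alpha$, and likewise for $t$, giving four combinations. The two ``mixed'' combinations, $\alpha \mid s$ together with $t \mid \alpha$ (which forces $t \mid s$) and $s \mid \alpha$ together with $\alpha \mid t$ (which forces $s \mid t$), are impossible, since $s$ and $t$ being non-adjacent means neither divides the other. Hence every common neighbour is either a \emph{common divisor} of $s$ and $t$ (both $\alpha \mid s$ and $\alpha \mid t$) or a \emph{common multiple} of $s$ and $t$ (both $s\mid\alpha$ and $t\mid\alpha$).

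Next I would count the two families separately. The common divisors of $s$ and $t$ are exactly the divisors of $d(s,t)$, so there are $s(d(s,t))$ of them; none equals $s$ or $t$, because a common divisor is at most $\min(s,t)$ and $\alpha=s$ (resp.\ $\alpha=t$) would force $s\mid t$ (resp.\ $t\mid s$). The common multiples lying in $X_N$ are the multiples of $l(s,t)=st/d(s,t)$ up to $N$, namely $l(s,t),\,2\,l(s,t),\dots,\fl*{N/l(s,t)}\,l(s,t)$, giving $\fl*{N/(st/d(s,t))}$ of them; again none equals $s$ or $t$, since non-adjacency yields $l(s,t)>\max(s,t)$. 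The two families are disjoint, as every common divisor is $\le\min(s,t)<l(s,t)\le$ every common multiple. Adding the two counts yields the claimed formula.

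The bulk of the argument is careful bookkeeping, so I expect the only real subtlety to be the exclusion of the degenerate possibilities $\alpha=s$ and $\alpha=t$ and the verification that the mixed divisibility cases genuinely cannot occur---both of which hinge entirely on the hypothesis that $s$ and $t$ are non-adjacent. Once that hypothesis is used to eliminate those cases, the divisor and multiple counts follow immediately from the definitions of $s(\cdot)$, $d(s,t)$ and $l(s,t)$.
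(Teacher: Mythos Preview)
Your proposal is correct and follows essentially the same decomposition as the paper---splitting the length-two paths into those through a common divisor of $s,t$ (counted by $s(d(s,t))$) and those through a common multiple (counted by $\fl*{N/l(s,t)}$), then using $l(s,t)=st/d(s,t)$. You are, if anything, more careful than the paper: the paper asserts without justification that every intermediate vertex is either a common divisor or a common multiple, whereas you explicitly rule out the two mixed divisibility cases and check disjointness and $\alpha\neq s,t$ using the non-adjacency hypothesis.
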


\begin{proof}
$g_{st}$ gives the number of paths of length two between $s$ and $t$ when they are not connected. There is a path of length two between $s,t$ via each of the common multiples of $s,t$ and also via the common divisors of $s,t$ including the vertex $1$. Number of common multiples of $s,t$, is the number of multiples of $l(s,t)$ which is $\fl*{ \frac{N} {l(s,t)} }$. Since $st = l(s,t) d(s,t)$, we can write $\fl*{\frac{N} {l(s,t)}}   = \fl*{ \frac{N} {st/d(s,t)} }$ to get an expression which depends only on gcd.  Number of common divisors of $s,t$ is the number of divisors of $d(s,t)$ given by $s(d(s,t))$. Hence, $g_{st} =  s( d(s,t) ) + \fl*{ \frac{N} {st/d(s,t)} }$.

\end{proof}

\begin{corollary}
\begin{equation}
    x_n =  \sum_{s,t \in X_N} \frac{( 1 - a_{st} ) a_{sn} a_{nt} }  { s( d(s,t) ) + \fl*{ \frac{N} {st/d(s,t)} } } 
\end{equation}
\end{corollary}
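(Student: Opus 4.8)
The plan is to obtain the stated formula by a single direct substitution, combining the adjacency-matrix expression for $x_n$ established in the preceding Definition with the closed form for $g_{st}$ proved in the immediately preceding Theorem. First I would recall that the Definition already rewrote the betweenness centrality as
\begin{equation*}
x_n = \sum_{s,t \ne n \in X_N} \frac{(1-a_{st})a_{sn}a_{nt}}{[a^2]_{st}},
\end{equation*}
so the entire task reduces to replacing the denominator $[a^2]_{st}$ by the explicit expression $s(d(s,t)) + \fl*{\frac{N}{st/d(s,t)}}$ and tidying up the index set.

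The key observation is that each summand is supported only on pairs $(s,t)$ that are \emph{not} adjacent: the factor $(1-a_{st})$ vanishes whenever $a_{st}=1$, so only terms with $a_{st}=0$ contribute. For precisely those pairs, the Definition records that the geodesic distance equals two and that the number of geodesics coincides with the number of length-two walks, i.e. $g_{st}=[a^2]_{st}$. Hence on the support of the sum I may substitute $[a^2]_{st}=g_{st}$, and then invoke the Theorem to write $g_{st}=s(d(s,t))+\fl*{\frac{N}{st/d(s,t)}}$. This yields the displayed identity.

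Two small bookkeeping points need checking rather than any real work. First, the corollary sums over all $s,t\in X_N$ whereas the Definition excludes $s,t=n$; this is harmless because $a_{nn}=0$ (the diagonal of the adjacency matrix is zero), so any term with $s=n$ or $t=n$ already carries a factor $a_{sn}a_{nt}=0$ and drops out, allowing the range to be enlarged without changing the value. Second, I would confirm that the extended sum never divides by zero on its support: whenever a term survives we have $a_{st}=0$, and since the vertex $1$ supplies a length-two path, $g_{st}\ge 1$, so the denominator $s(d(s,t))+\fl*{\frac{N}{st/d(s,t)}}$ is strictly positive.

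I expect no genuine obstacle here, since the statement is essentially a restatement of the Theorem placed inside the sum of the Definition. The only point demanding care is the legitimacy of swapping $[a^2]_{st}$ for the Theorem's formula, which is valid exactly because the numerator confines attention to non-adjacent pairs — the regime in which the Theorem was established — so that the substitution is made only where both sides are meaningful.
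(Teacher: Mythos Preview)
Your proposal is correct and matches the paper's approach: the corollary is stated without proof, as it follows immediately by substituting the Theorem's expression for $g_{st}$ into the adjacency-matrix formula for $x_n$ given in the Definition. Your bookkeeping observations (enlarging the index set via $a_{nn}=0$ and the nonvanishing of the denominator on the support) are sound and simply make explicit what the paper leaves implicit.
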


\begin{corollary}
For a prime number p, 
\begin{equation}
    x_p = \sum_{j = 2}^{M(p)} \sum_{k = j + 1, j \nmid k}^{M(p)} \frac{1}{ s( d(j,k) p ) + \fl*{ \frac{N} { pjk / d(j,k) } } } 
\end{equation}
where $M(p)=\fl*{\frac{N}{p}}$
\end{corollary}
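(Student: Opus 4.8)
The plan is to specialize the preceding corollary to the case $n = p$ with $p$ prime. That corollary writes $x_n = \sum_{s,t \in X_N} \frac{(1-a_{st})\,a_{sn}\,a_{nt}}{s(d(s,t)) + \fl*{N/l(s,t)}}$, where I have used $st/d(s,t) = l(s,t)$. The factors $a_{sp}$ and $a_{pt}$ force both $s$ and $t$ to be neighbours of $p$, so the first step is to describe the neighbourhood of a prime. This is exactly where primality enters: the only proper divisor of $p$ is $1$, so the neighbours of $p$ are precisely the vertex $1$ together with the multiples $2p, 3p, \dots, M(p)\,p$, where $M(p) = \fl*{N/p}$.

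Next I would discard every term in which $s$ or $t$ equals $1$. Since $1$ divides every number, $1$ is adjacent to all vertices, so $a_{st} = 1$ whenever $s = 1$ or $t = 1$, and the factor $(1 - a_{st})$ annihilates such terms. Hence only pairs in which both endpoints are multiples of $p$ survive; write $s = jp$ and $t = kp$ with $2 \le j, k \le M(p)$. For such a pair the quantities in the denominator simplify by the elementary identities $d(jp, kp) = p\,d(j,k)$ and $l(jp, kp) = p\,jk/d(j,k)$, so that $s(d(s,t)) = s(d(j,k)\,p)$ and $\fl*{N/l(s,t)} = \fl*{N/(pjk/d(j,k))}$, reproducing the denominator in the statement.

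The remaining work is the bookkeeping over the indices $j, k$. The surviving factor $(1 - a_{st})$ requires $s$ and $t$ to be non-adjacent, i.e. $jp \nmid kp$ and $kp \nmid jp$, which is equivalent to $j \nmid k$ and $k \nmid j$. Restricting the double sum to $k > j$ lists each contributing unordered pair exactly once (the summand is symmetric in $s$ and $t$), and for $k > j$ the condition $k \nmid j$ holds automatically, leaving $j \nmid k$ as the only genuine constraint; this is precisely the index set $\sum_{j=2}^{M(p)} \sum_{k = j+1,\ j \nmid k}^{M(p)}$. I expect the main obstacle to be exactly this final translation: correctly excluding the vertex $1$, rewriting the adjacency condition $a_{st} = 0$ as the divisibility restriction, and confirming the multiplicity with which each pair is counted so that no spurious factor of two survives relative to the ordered-pair sum defining $x_p$.
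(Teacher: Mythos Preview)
Your argument is correct and is precisely the specialization the paper leaves implicit: the corollary is stated without proof, and your derivation---identifying the neighbourhood of a prime as $\{1,2p,\dots,M(p)p\}$, killing the $s=1$ or $t=1$ terms via $(1-a_{st})$, applying $d(jp,kp)=p\,d(j,k)$ and $l(jp,kp)=pjk/d(j,k)$, and then reducing the non-adjacency condition to $j\nmid k$ on the range $j<k$---is exactly the intended route. Your caution about the ordered-versus-unordered counting is well placed; the paper's convention (visible from $\sum_{s,t} n_{st}^n = {}^{k_n}C_2 - e_n$) is that the sum runs over unordered pairs, so restricting to $k>j$ gives the right multiplicity with no leftover factor of two.
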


\begin{corollary}

\begin{equation}
  \sum_{s,t \in X_N} n_{st}^n = {}^{k_n} C_2 - e_n     
\end{equation}
gives the number of pairs neighbors of $n$ between which there is no edge. 
\end{corollary}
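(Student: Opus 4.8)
The plan is to read the summand $n_{st}^{n}$ combinatorially and then count by complementary enumeration over the pairs of neighbors of $n$. From the definition established above, $n_{st}^{n}=(1-a_{st})\,a_{sn}\,a_{nt}$, and every adjacency entry is $0$ or $1$. First I would observe that this product equals $1$ exactly when $a_{sn}=a_{nt}=1$ and $a_{st}=0$, and equals $0$ otherwise; that is, $n_{st}^{n}=1$ if and only if both $s$ and $t$ are neighbors of $n$ while no edge joins $s$ to $t$. Consequently the sum $\sum_{s,t\in X_N} n_{st}^{n}$ is simply a count of the pairs $\{s,t\}$ of neighbors of $n$ having $a_{st}=0$.

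Next I would count these pairs by complement. Since $n$ has $k_n$ neighbors, there are $\binom{k_n}{2}$ pairs of neighbors in all, and each such pair is either joined by an edge or not. The pairs that \emph{are} joined by an edge are, by the defining expression $e_n=\sum_{s,t} a_{ns}a_{st}a_{tn}$, precisely $e_n$ in number. As these two types of pairs are disjoint and together exhaust all $\binom{k_n}{2}$ pairs of neighbors, the number with no joining edge is $\binom{k_n}{2}-e_n$. This is exactly $\sum_{s,t} n_{st}^{n}$, giving the claimed identity and its stated interpretation.

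The argument is pure bookkeeping, so the one point needing care — and the only place an error could creep in — is the summation convention. The factor $(1-a_{st})$ must be retained so that adjacent pairs are excluded, and the sum must range over unordered pairs $\{s,t\}$, matching the convention under which the worked example gives $e_6=2$ in $G_{10}$; under an ordered convention both sides would simply be multiplied by a common factor of two, leaving the identity unchanged in content. Notably, the prime factorisation of $n$ and the floor/divisor formulas play no role here: the result follows directly from the meanings of $k_n$ and $e_n$ together with the adjacency-matrix form of $n_{st}^{n}$.
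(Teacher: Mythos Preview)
Your argument is correct and is exactly the intended justification: interpret $n_{st}^{n}=(1-a_{st})a_{sn}a_{nt}$ as the indicator of ``$s,t$ are both neighbours of $n$ but not adjacent to each other,'' and then subtract the $e_n$ adjacent neighbour-pairs from the $\binom{k_n}{2}$ total neighbour-pairs. The paper states this corollary without proof, so there is no alternative approach to compare; your reading of the summation as ranging over unordered pairs (consistent with the $e_6=2$ example) is the right normalisation, and your observation that the divisor/floor formulas play no role here is accurate.
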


\subsection{Connectance of $G_N$}
\begin{definition}
Connectance, $C$ (or link density) of a graph is defined as the ratio of the number of edges present in the graph to the number of possible edges between all vertex pairs n,m in the graph of size $N$.
\end{definition}

\begin{theorem}
The connectance, $C$ of $G_N$ is $\displaystyle{\frac { NlnN+2(\gamma-1)N+O(N^{0.5}) } {^NC_2}}$ .
\end{theorem}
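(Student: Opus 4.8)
The plan is to compute the total number of edges $E$ of $G_N$ and then form the ratio $C = E/{}^{N}C_2$. The cleanest route to $E$ is to count each edge by its \emph{larger} endpoint: an edge $\{m,n\}$ with $m<n$ exists precisely when $m\mid n$, and the number of divisors of $n$ strictly below $n$ is $s(n)-1$. Summing over all choices of the larger endpoint gives $E=\sum_{n=1}^{N}\bigl(s(n)-1\bigr)=\bigl(\sum_{n=1}^{N}s(n)\bigr)-N$. Equivalently, one may apply the handshaking lemma to the degree formula of Theorem~1, $k_n=\fl*{N/n}+s(n)-2$, together with the identity $\sum_{n=1}^{N}\fl*{N/n}=\sum_{n=1}^{N}s(n)$ (both sides count lattice points $(a,b)$ with $ab\le N$); this again yields $E=\tfrac12\sum_{n}k_n=\sum_{n=1}^{N}s(n)-N$. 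Either way, the whole problem collapses onto the asymptotics of the summatory divisor function $D(N):=\sum_{n=1}^{N}s(n)$.

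I would then invoke Dirichlet's theorem on the average order of the divisor function, $D(N)=N\ln N+(2\gamma-1)N+O(\sqrt{N})$, where $\gamma$ is the Euler--Mascheroni constant \cite{Hardy75}. Substituting, $E=D(N)-N=N\ln N+(2\gamma-1)N-N+O(\sqrt{N})=N\ln N+2(\gamma-1)N+O(\sqrt{N})$, which is exactly the numerator claimed in the statement. Dividing by ${}^{N}C_2$ then gives the stated connectance, completing the argument.

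The main obstacle is not the graph-theoretic bookkeeping, which is elementary, but establishing the sharp $O(\sqrt{N})$ error term for $D(N)$. The naive estimate $D(N)=\sum_{n\le N}\fl*{N/n}=N\sum_{n\le N}\tfrac1n+O(N)$ recovers only the leading term $N\ln N$ with an $O(N)$ remainder, which is too crude since it would swallow the $2(\gamma-1)N$ contribution that I must pin down exactly. To obtain the correct constant and error I would use Dirichlet's hyperbola method: count lattice points under $ab\le N$ using the symmetry about $a=b=\sqrt{N}$, writing $D(N)=2\sum_{a\le\sqrt{N}}\fl*{N/a}-\fl*{\sqrt{N}}^{2}$, and then apply $\sum_{a\le x}1/a=\ln x+\gamma+O(1/x)$ to each half. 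The crossover at $\sqrt{N}$ is precisely what produces both the constant $2\gamma-1$ and the $O(\sqrt{N})$ remainder. Since this is the classical Dirichlet divisor estimate, I would cite it rather than reprove it, treating it as the one nontrivial analytic input to an otherwise combinatorial computation.
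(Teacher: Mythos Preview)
Your proposal is correct and follows essentially the same approach as the paper: your handshaking-lemma route via $k_n=\fl*{N/n}+s(n)-2$ together with the identity $\sum_{n\le N}\fl*{N/n}=\sum_{n\le N}s(n)$ is exactly what the paper does, reducing the edge count to the summatory divisor function and then quoting Dirichlet's $D(N)=N\ln N+(2\gamma-1)N+O(\sqrt N)$. Your first route (counting each edge by its larger endpoint) is a slightly more direct way to reach the same reduction, but the substance is identical.
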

\begin{proof}
By definition, connectance $C = \displaystyle{\sum_{n,m \in V(G_N)} \frac{0.5A_{nm}}{^NC_2} = \sum_{n=1}^N \frac{ 0.5k_n }{  ^NC_2}}$.
Now, $\displaystyle{\sum_{n=1}^N} k_n = \displaystyle{\sum_{n=1}^N} (\fl*{\frac{N}{n}} + s(n) - 2) $.
Since, $\displaystyle{\sum_{n=1}^N} \fl*{\frac{N}{n}} = \displaystyle{\sum_{n=1}^N} s(n)$,
\begin{equation*}
\displaystyle{\sum_{n=1}^N} k_n = \displaystyle{\sum_{n=1}^N} (2 \fl*{\frac{N}{n}} - 2) \end{equation*}
Therefore,
\begin{equation*}
C = \sum_{n=1}^N \frac{ \fl*{\frac{N}{n}} -1 }{  ^NC_2} = \sum_{n=1}^N \frac { \fl*{\frac{N}{n}} } {^NC_2}  - \frac{N}{^NC_2}
\end{equation*}
\begin{equation*}
=\frac { NlnN+(2\gamma-1)N+O(N^{0.5}) } {^NC_2}  - \frac{N}{^NC_2}
\end{equation*}
            
\begin{equation}
\text{Connectance}    = \displaystyle{\frac { NlnN+2(\gamma-1)N+O(N^{0.5}) } {^NC_2}} .
\end{equation}
\end{proof}

\section{ Patterns in $\Delta c_n$ for varying $n$}
\begin{definition}
Let $\Delta c_n$ = $c_n - c_{n+1}$ be the difference in the local clustering coefficients of consecutive vertices.
Consider $n,n+1$ such that $\fl*{\frac{N}{a+1}} < n < n+1 \le \fl*{\frac{N}{a}}$ for some positive integer $a$, then $M(n) = M(n+1) = a$.
Suppose $\displaystyle{\sum_{m|n} s(m)  = \sum_{m'|n+1} s(m') }$, then $ s(n) = s(n+1) $, which results in $\Delta c_n = 0$.

In terms of the expression,
$$
\Delta c_n = \frac{\displaystyle{\sum_{m|n} s(m) } - 2s(n) + \sum_{j=2}^{M(n)} \fl*{\frac{M(n)}{j}} + (M(n)-1)(s(n)-2)}{0.5k_n(k_n-1)} 
$$
$$
- \   \frac{\displaystyle{\sum_{m'|n+1} s(m')} - 2s(n+1) + \sum_{j=2}^{M(n+1)} \fl*{\frac{M(n+1)}{j}} + (M(n+1)-1)(s(n+1)-2)}{0.5k_{n+1}(k_{n+1}-1)}.
$$

Let us take $n,n+1$ such that $\fl*{\frac{N}{a+1}} < n < n+1 \le \fl*{\frac{N}{a}}$ for some positive integer $a$, then $M(n) = M(n+1) = a$.

$$\Delta c_n = \frac{\left(\displaystyle{\prod_{i=1}^k} \  (j_i+1)(j_i+2)/2^k \right) - 2s(n) + (a-1)(s(n)-2)}{0.5(a + s(n) - 2)(a + s(n) - 3)}
$$
$$
- \  \frac{\left(\displaystyle{\prod_{i=1}^{k'} \  (j_i'+1)(j_i'+2)/2^{k'}}\right) - 2s(n+1) + (a-1)(s(n+1)-2)}{0.5(a + s(n+1) - 2)(a + s(n+1) - 3)}.$$

We see that $\Delta c_n = 0$ when 
\begin{eqnarray}
\frac{\displaystyle{\prod_{i=1}^k} \  (j_i+1)(j_i+2)}{2^k}  = \frac{\displaystyle{\prod_{i=1}^{k'}} \  (j_i'+1)(j_i'+2)}{2^{k'}} \label{5.130}
\end{eqnarray} 
and
\begin{eqnarray} 
s(n) = s(n+1) \label{5.140} 
\end{eqnarray} 
i.e, $\Delta c_n = 0$ when $n,n+1$ have the same prime powers and the same number of divisors. A result by Heath-Brown (1984) showed that there are infinitely many consecutive natural numbers $n,n+1$ such that $s(n)=s(n+1)$ \cite{He84}. It is possible that there are also infinitely many consecutive pairs $n,n+1$ with not only the same number of divisors, but exactly the same prime powers. For example, in a graph of size of $N=100$, $93$ \& $94$ is a pair that lies between $\fl*{\frac{N}{2}}$ and $\fl*{\frac{N}{1}}$ and have the same $c_n = \frac{2}{3}$. Therefore $\Delta c_{93} = 0$. Other such pairs are $94$ \& $95$, $86$ \& $87$ and $85$ \& $86$ which have the same $c_n = \frac{2}{3}$, resulting in $\Delta c_n = 0$.  Similarly, there are many such pairs depending on the size of the graph.\\
\end{definition}

\begin{figure}[hbtp]
\includegraphics[width=0.5\textwidth]{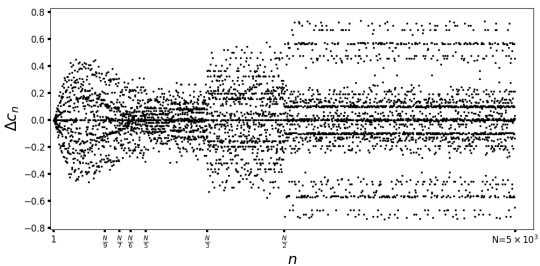}
\caption{Plot of $\Delta c_n$ vs $n$ for $N = 5 \times 10^3$. }\label{fig:3}
\end{figure}

One of the authors studied numerically the plot of $\Delta c_n$ vs $n$ for large N, in reference \cite{Snehal15}. In figure \ref{fig:3} we show the patterns in the plot for $ N= 5 \times 10^3$. We observe that there are many pairs $n,n+1$ with $\Delta c_n \ne 0$. Some examples when $N=100$ are, $c_{82} = \frac{2}{3}$, $c_{83} = 0$, so $\Delta c_{82} = \frac{2}{3}$ and $c_{73} = 0$  $c_{74} = \frac{2}{3}$, so $\Delta c_{73} = -\frac{2}{3}$.

From this figure, we conjecture that there are infinitely many consecutive pairs $n, n+1$ such that $ s(n) = s(n+1) $ and $\displaystyle{\sum_{m|n} s(m)  = \sum_{m'|n+1} s(m') + k }$ for some $k \in  \mathbb{Z}$. We hope that further explorations in this direction, may lead to insights about how large can $k$ be and bring out deeper
connections between number theory and graph theory along similar lines.

\section*{Acknowledgements}
One of the authors, Abiya R would like to thank Department of Science and Technology, Govt. of India for INSPIRE scholarship.
\section*{Conflict of interest}
The authors certify that they have no affiliations with or involvement in any organization or entity with any financial interest or non-financial interest in the subject matter or materials discussed in this manuscript.

\end{document}